\newcommand{\calO}{{\mathcal{O}}}
\newcommand{\calQ}{\mathcal{Q}}
\newcommand{\G}{\mathbf{G}}
\newcommand{\Z}{\mathbf{Z}}
\newcommand{\F}{\mathbf{F}}
\newcommand{\Q}{\mathbf{Q}}
\renewcommand{\P}{\mathbf{P}}
\newcommand{\Spec}{{\mathrm{Spec}}}
\newcommand{\Hom}{\mathrm{Hom}}
\newcommand{\Shv}{\mathrm{Shv}}
\newcommand{\GL}{\mathrm{GL}}
\newcommand{\Ext}{\mathrm{Ext}}
\newcommand{\Vect}{\mathrm{Vect}}
\newcommand{\Mod}{\mathrm{Mod}}
\newcommand{\R}{\mathrm{R}}
\newcommand{\Pic}{\mathrm{Pic}}
\newcommand{\et}{\mathrm{\acute{e}t}}
\newcommand{\id}{\mathrm{id}}
\newcommand{\Tr}{\mathrm{Tr}}
\newcommand{\rig}{\mathrm{rig}}
\renewcommand{\ker}{\mathrm{ker}}
\newcommand{\fram}{\mathfrak{m}}
\newcommand{\comment}[1]{}
\newcommand{\cosimp}[3]{\xymatrix@1{#1 \ar@<.4ex>[r] \ar@<-.4ex>[r] & {\ }#2 \ar@<0.8ex>[r] \ar[r] \ar@<-.8ex>[r] & {\ } #3 \ar@<1.2ex>[r] \ar@<.4ex>[r] \ar@<-.4ex>[r] \ar@<-1.2ex>[r] & \cdots }}
\newcommand{\colim}{\mathop{\mathrm{colim}}}
\begin{document}
\bibliographystyle{alpha}
\newtheorem{theorem}{Theorem}[section]
\newtheorem*{theorem*}{Theorem}
\newtheorem*{condition*}{Condition}
\newtheorem*{definition*}{Definition}
\newtheorem*{corollary*}{Corollary}
\newtheorem{proposition}[theorem]{Proposition}
\newtheorem{lemma}[theorem]{Lemma}
\newtheorem{corollary}[theorem]{Corollary}
\newtheorem{claim}[theorem]{Claim}

\theoremstyle{definition}
\newtheorem{definition}[theorem]{Definition}
\newtheorem{question}[theorem]{Question}
\newtheorem{remark}[theorem]{Remark}
\newtheorem{guess}[theorem]{Guess}
\newtheorem{example}[theorem]{Example}
\newtheorem{condition}[theorem]{Condition}
\newtheorem{warning}[theorem]{Warning}
\newtheorem{notation}[theorem]{Notation}
\newtheorem{construction}[theorem]{Construction}

\title{A note on the non-existence of small Cohen-Macaulay algebras}
\begin{abstract}
By finding a $p$-adic obstruction, we construct many examples of complete noetherian local normal $\F_p$-algebras $R$ such that no module-finite extension $R \hookrightarrow S$ is Cohen-Macaulay. These examples should be contrasted with a result of Hochster-Huneke: the directed union of all such extensions is always Cohen-Macaulay.
\end{abstract}
\author{Bhargav Bhatt}
\address{Department of Mathematics \\ University of Michigan \\ Ann Arbor 48109 \\ USA}
\email{bhattb@umich.edu}
\maketitle

\section{Introduction}
\label{sec:intro}

A noetherian ring is called Cohen-Macaulay (CM) if there are no non-trivial relations between elements of a system of parameters. These rings form an exceptionally well behaved class: local cohomology vanishes when it can, and duality works out beautifully. This note studies rings realisable as subrings of CM rings:

\begin{definition}
A noetherian ring $R$ admits a {\em big CM algebra} if there is an injective map $R \hookrightarrow S$ of rings with $S$ CM. If the ring $S$ can be chosen to be finitely generated as an $R$-module, then we say that $R$ admits a {\em small CM algebra}.
\end{definition}

Under mild hypotheses, a fundamental result of Hochster and Huneke \cite{HHBigCM} shows that any ring $R$ that contains a field admits a big CM algebra; in fact, if $R$ is an $\F_p$-algebra, the absolute integral closure of $R$ does the job.  Thus, one asks: does a ring $R$ always admit a small CM algebra? The answer is ``no'' in characteristic $0$ based on a local cohomological obstruction\footnote{A normal local noetherian $\Q$-algebra $R$ that admits a small CM algebra $S$ is itself CM: $H^i_\fram(R)$ is a summand of $H^i_\fram(S)$ via the trace splitting.}. In characteristic $p$, however, the results of {\em loc.\ cit.} immediately nullify any {\em coherent} cohomological obstructions: any non-trivial relation can be trivialised after a finite extension. In fact, by \cite{GLBigCM}, there is a single finite extension $R \hookrightarrow S$ trivialising {\em all} unwanted relations. Nevertheless, our goal in this note is to discuss a negative answer to the above question in characteristic $p$ using a {\em $p$-adic} (rigid) cohomological obstruction.

\begin{theorem}
	\label{thm:mainthm}
	Let $(A,L)$ be polarised projective variety over a perfect field $k$; set $\widehat{R}$ to be completion at the origin of $\oplus_{n \geq 0} H^0(A,L^n)$.  Assume $H^i_\rig(A)_{< 1} \neq 0$ for some $0 < i < \dim(A)$. Then $\widehat{R}$ does not admit a small CM algebra.
\end{theorem}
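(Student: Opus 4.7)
Suppose for contradiction that $\widehat{R}$ admits a small CM algebra $\widehat{R}\hookrightarrow S$. The plan is to exhibit a class in a $p$-adic refinement of the local cohomology of $\widehat{R}$ that is non-zero by the slope-$<1$ hypothesis on $H^i_\rig(A)$ but is forced to vanish if such an $S$ exists.

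\textbf{Local cohomology of the cone.} Since $\Spec(R)\setminus\{0\}$ is a $\G_m$-torsor over $A$ (the total space of $L^{\vee}$ minus the zero section), the standard cone calculation yields, for $0 < j < \dim\widehat{R}$, a canonical isomorphism
\[ H^j_\fram(\widehat{R}) \cong \bigoplus_{n\in\Z} H^{j-1}(A,L^n), \]
with degree-zero summand $H^{j-1}(A,\calO_A)$. Setting $j = i+1$ (admissible since $0 < i < \dim A = \dim\widehat{R} - 1$) picks out the summand $H^i(A,\calO_A) \subseteq H^{i+1}_\fram(\widehat{R})$. This alone gives no contradiction: by Hochster-Huneke every such class dies in the absolute integral closure $\widehat{R}^+$, hence in some finite extension.

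\textbf{Enhancement to $p$-adic cohomology.} The absolute Frobenius acts on $H^i(A,\calO_A)$, and its inverse limit is Illusie's group $H^i(A,\W\calO_A)$, whose $\Q$-linearization equals the slope-$<1$ part of $H^i_\rig(A)$ by Illusie-Mazur. The hypothesis therefore produces a non-zero class $\widetilde\alpha$ of Frobenius-slope $<1$ in a Witt-vector (equivalently, prismatic) refinement of $H^{i+1}_\fram(\widehat{R})$. The analogous enhancement of $R\Gamma_\fram(S)$ receives $\widetilde\alpha$ via pullback. On one hand, the CM assumption on $S$ combined with a Nakayama-type d\'evissage (Frobenius is bijective on the slope-$<1$ part after inverting $p$, so $p$-adic vanishing reduces to mod-$p$ vanishing) forces the target slope-$<1$ piece to vanish in the relevant degree. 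On the other hand, any finite extension $\widehat{R}\hookrightarrow S$ factors through $\widehat{R}^+$, and the slope-$<1$ cohomology of $A$ is intrinsic to the $p$-adic structure of $\widehat{R}^+$, hence injects into its pullback to $S$. These two statements together contradict the non-vanishing of $\widetilde\alpha$.

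\textbf{Main obstacle.} The principal difficulty is constructing the correct $p$-adic refinement of local cohomology --- most naturally the prismatic enhancement --- and verifying both the d\'evissage from mod-$p$ vanishing to slope-$<1$ $p$-adic vanishing, and the injectivity of pullback on slope-$<1$ pieces along arbitrary (singular, non-normal) finite extensions of $\widehat{R}$. Setting up the comparison between rigid cohomology of $A$ and the $p$-adic local cohomology of $\widehat{R}$ so that all functorialities are compatible is where the bulk of the technical work should lie.
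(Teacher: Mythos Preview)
Your overall architecture matches the paper's: lift to a $p$-adic (Witt-vector) version of local cohomology, identify the slope-$<1$ piece of $H^i_\rig(A)$ inside $H^{i+1}_\fram$ via the cone description, argue the class persists in any finite extension, and then d\'evisse back to a nonzero mod-$p$ local cohomology class contradicting the CM property. But two of your three technical steps are misdirected.

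\textbf{The enhancement.} You do not need prismatic cohomology. The paper works with the (Zariski) Witt sheaves $W_n\calO_X$ and their limit $W\calO_X$, and the comparison you want is exactly $H^i(A,W\calO_A)_\Q \simeq H^i_\rig(A)_{<1}$ (Bloch--Deligne--Illusie; Berthelot--Bloch--Esnault in the singular case). The passage from $A$ to the cone is elementary: since $\pi:U\to A$ is a $\G_m$-torsor, $W_n\calO_A\to\pi_*W_n\calO_U$ splits, and then $H^i(U,W\calO_U)_\Q\simeq H^{i+1}_Z(\widehat X,W\calO_{\widehat X})_\Q$ by affineness of $X$ and excision. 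Likewise the d\'evissage is simpler than you suggest: $W_n\calO_Y$ is an iterated extension of $\calO_Y$, so if $H^{i+1}_Z(Y,W\calO_Y)_\Q\neq 0$ then $H^j_Z(Y,\calO_Y)\neq 0$ for $j\in\{i,i+1\}$. No appeal to Frobenius-bijectivity on slopes is needed.

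\textbf{The real gap.} Your argument for why the class survives in $S$ is not valid. Saying the slope-$<1$ cohomology is ``intrinsic to $\widehat R^+$'' and hence injects into $S$ gets the logic backwards: by Hochster--Huneke $\widehat R^+$ \emph{is} CM, so the class \emph{does} die there; the whole point is that it cannot die at any finite stage. The paper's mechanism is a \emph{trace map}: for a finite surjective $f:Y\to X$ of normal noetherian schemes and any \'etale-representable abelian sheaf $A$ (in particular each $W_n$), there is a functorial $\Tr:f_*f^*A\to A$ with $\Tr\circ f^*$ equal to multiplication by $\deg f$. Applied to $W_n\calO$, passed to the limit, and tensored with $\Q$, this makes $H^{i+1}_Z(\widehat X,W\calO_{\widehat X})_\Q$ a \emph{direct summand} of $H^{i+1}_{f^{-1}Z}(\widehat Y,W\calO_{\widehat Y})_\Q$. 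That is the step you were missing. Note this requires $\widehat Y$ normal, so the argument as written handles finite extensions with normal target; your worry about ``singular, non-normal'' $S$ is a separate (and minor) reduction, not the main obstacle.
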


The hypothesis on $A$ is satisfied, for example, if $A$ is an abelian surface. This theorem generalises a calculation of Sannai-Singh \cite[Example 5.3]{SinghSannai} (who showed the non-existence of a {\em graded} small CM algebra for a specific $A$). This theorem should also be contrasted with a result of Hartshorne: if $A$ is CM, then $R$ admits a small CM {\em module}, i.e., a module whose depth is $\dim(A)$ (see \cite{HochsterSmallCM}). We end by noting that the proof of Theorem \ref{thm:mainthm} shows a stronger statement: any local $\F_p$-algebra that admits a small CM algebra is ``Witt Cohen-Macaulay,'' see Remark \ref{rmk:moregeneralstatement}.

\subsection*{A summary of the proof} Let us informally explain the proof in an example: $A$ is an ordinary abelian surface. The key idea is to work $p$-adically instead of modulo $p$ until the end to track the divisibility properties of local cohomology under finite extensions. More precisely, the $p$-rank of $A$ contributes free $\Z_p$-summands to $H^2_\fram(\Spec(\widehat{R})_\et,\Z_p)$. Trace formalism then shows: for any finite extension $\widehat{R} \to \widehat{S}$, the group $H^2_{\fram}(\Spec(\widehat{S})_\et,\Z_p)$  also contains free $\Z_p$-summands. Reducing these modulo $p$ and using the Artin-Schreier sequence shows $H^2_\fram(\widehat{S}) \neq 0$, so $\widehat{S}$ is not CM. In the body of the note, we work with Witt-vector cohomology instead of $p$-adic \'etale cohomology to apply the preceding argument with fewer ordinarity constaints,  see Remark \ref{rmk:wittversusetale}.

\subsection*{Notation} 
For any $\F_p$-scheme $X$, we write $\{W_n \calO_X\}$ of the projective system of (Zariski) sheaves defined by the truncated Witt vector functors (see \cite{IllusieWitt,SerreWitt}), and $W\calO_X := \lim_n W_n\calO_X$ for its inverse limit (which is also the derived limit: the transition maps $W_n \calO_X \to W_{n-1} \calO_X$ are surjective for all $n$, and $W_n\calO_X$ has no higher cohomology on affines).  For a closed subset $Z \subset X$ and any abelian sheaf $A$, we write $\R\Gamma_Z(X,A)$ for the homotopy-kernel of $\R\Gamma(X,A) \to \R\Gamma(X-Z,A)$. In particular, since cohomology commutes with derived limits, we have $\R\Gamma_Z(X,W\calO_X) \simeq \R\lim_n \R\Gamma_Z(X,W_n\calO_X)$. Recall that  $\R\Gamma(X,-)$ commutes with filtered colimits of sheaves if $X$ is quasi-compact and quasi-separated. In particular, if both $X$ and $X-Z$ are quasi-compact and quasi-separated, then $\R\Gamma_Z(X,W\calO_{X,\Q}) \simeq \big(\R\lim_n \R\Gamma_Z(X,W_n\calO_X)\big) \otimes_{\Z} \Q$.

\subsection*{Acknowledgements} I am very grateful Manuel Blickle and Kevin Tucker for making me aware of the question studied in this note, and several related discussions. Thanks are due to Johan de Jong, Mel Hochster and Anurag Singh for their extremely helpful comments, questions, and encouragement.

\section{Proof}

\subsection{Remarks on Witt vectors}

We use $k$ to denote a fixed perfect base field of characteristic $p$. We start by recalling a fundamental result identifying the slope $< 1$ part of rigid cohomology with Witt vector cohomology; the smooth case is due to Bloch-Deligne-Illusie (see \cite[Theorem 0.2]{BlochCrysKtheory} and \cite[\S II.3]{IllusieWitt}), while the singular case is newer:

\begin{theorem}[{\cite[Theorem 1.1]{BBEWitt}}]
	\label{thm:bbe}
Let $A$ be a proper $k$-variety. Then $H^i(A,W\calO_A)_{\Q} \simeq H^i_\rig(A)_{< 1}$.
\end{theorem}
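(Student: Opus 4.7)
The plan is to reduce to the smooth proper case, where the result is the classical theorem of Bloch--Deligne--Illusie, and then propagate the comparison to singular varieties via cohomological descent along a smooth proper hypercover; this is essentially the strategy carried out in \cite{BBEWitt}.

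First I would treat the case when $A$ is smooth and proper over $k$. Here rigid cohomology agrees with crystalline cohomology tensored with $\Q$: $H^i_\rig(A) \simeq H^i_\crys(A/W(k))_\Q$. Crystalline cohomology is computed by the de Rham--Witt complex $W\Omega^\bullet_A$, and Frobenius decomposes its rationalization into Newton slope pieces. The Bloch--Illusie theorem (see \cite[Theorem 0.2]{BlochCrysKtheory} and \cite[\S II.3]{IllusieWitt}) identifies the slope $[0,1)$ summand of $H^i_\crys(A/W(k))_\Q$ with the cohomology of the first column $W\Omega^0_A = W\calO_A$, yielding the stated isomorphism.

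For a general proper $k$-variety $A$, I would choose a smooth proper hypercover $\pi\colon A_\bullet \to A$, whose existence is guaranteed by de Jong's alterations together with a standard simplicial refinement. Both sides of the theorem should then satisfy cohomological descent along $\pi$. For rigid cohomology this is due to Chiarellotto--Tsuzuki and Tsuzuki for proper hypercovers; moreover the slope decomposition is compatible with descent because Frobenius acts functorially on the whole simplicial diagram, so the slope $<1$ summand descends as well. For rational Witt vector cohomology one needs the analogous identification $\R\Gamma(A, W\calO_A)_\Q \simeq \Tot\bigl(\R\Gamma(A_\bullet, W\calO_{A_\bullet})_\Q\bigr)$. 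Granting both descent statements, the $E_1$ pages of the two resulting spectral sequences agree termwise by the smooth case, with simplicially-induced differentials on each side, so the abutments match and the theorem follows.

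The main obstacle is establishing descent for rational Witt vector cohomology. This requires a delicate interchange of three operations: the derived inverse limit $\R\lim_n$ that produces $W\calO$ from the truncations $W_n\calO$, the derived pushforward along the faces of the hypercover, and the totalization of the resulting cosimplicial object. At each finite truncation level the sheaf $W_n\calO_{A_\bullet}$ is quasi-coherent and $\pi$ is proper, so pushforward is well-behaved; the obstructions to interchanging the inverse limit with the cosimplicial totalization live in pro-systems of finitely generated $W(k)$-modules, and any Mittag-Leffler failure contributes at worst bounded $p$-power torsion that is annihilated upon rationalization. This is precisely where tensoring with $\Q$ is essential, and is the technical heart of the argument.
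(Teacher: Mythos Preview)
The paper does not prove this statement at all: Theorem~\ref{thm:bbe} is stated as a quotation from \cite[Theorem 1.1]{BBEWitt}, with attribution of the smooth case to Bloch--Deligne--Illusie, and no argument is given. So there is no ``paper's own proof'' to compare your proposal against.

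Your sketch is a reasonable high-level outline of the strategy in the cited paper of Berthelot--Bloch--Esnault: reduce to the smooth proper case via the de Rham--Witt slope decomposition, then handle singular proper varieties by cohomological descent along a smooth proper hypercover, checking that both rigid cohomology and rational Witt vector cohomology satisfy such descent. You correctly flag the delicate point (interchanging $\R\lim_n$, pushforward, and totalization, with torsion killed upon rationalization). If you were writing this paper you would simply cite the result, as the author does.
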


\begin{remark}
	\label{rmk:wittversusetale}
	Theorem \ref{thm:bbe} is the main reason we work with Witt-vector cohomology instead of $p$-adic \'etale cohomology in this note: the latter only describes the slope $0$ part of rigid cohomology, while the former describes the (potentially much larger) slope $< 1$ part. In particular, if $A$ is an abelian variety, then $H^1(A,W\calO_{A,\Q})$ is always non-zero for weight reasons, while $H^1(A,\Z_p)$ vanishes if $A$ is supersingular and $k = \overline{k}$.
\end{remark}

The next lemma allows deduction of modulo $p$ consequences from rational assumptions.

\begin{lemma}
	\label{lem:devissagewitt}
	Let $Y$ be a $k$-scheme with a closed subscheme $Z$. If $H^i_Z(Y,W\calO_Y)_{\Q} \neq 0$ for some $i$, then $H^j_Z(Y,\calO_Y) \neq 0$ for $j = i$ or $j = i-1$.
\end{lemma}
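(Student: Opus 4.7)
The plan is to prove the contrapositive: assuming $H^j_Z(Y,\calO_Y) = 0$ for both $j = i-1$ and $j = i$, I will show that $H^i_Z(Y, W\calO_Y) = 0$ (integrally, which is strictly stronger than the rational vanishing claimed in the lemma).

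The main tool is the standard short exact sequence of Zariski sheaves of abelian groups on $Y$
\[
0 \to \calO_Y \xrightarrow{V^n} W_{n+1}\calO_Y \xrightarrow{R} W_n\calO_Y \to 0,
\]
where $R$ is the restriction map and the kernel is identified with $\calO_Y$ via the $n$-fold Verschiebung $a \mapsto (0,\ldots,0,a)$. Applying $\R\Gamma_Z(Y,-)$ produces a long exact sequence in which terms of the form $H^\ast_Z(Y, \calO_Y)$ flank the transition map $H^j_Z(Y, W_{n+1}\calO_Y) \to H^j_Z(Y, W_n\calO_Y)$ on both sides. I would then induct on $n$: since $W_1\calO_Y = \calO_Y$, the base case $n = 1$ is immediate from the hypothesis, and in the inductive step the vanishing of $H^{i-1}_Z(Y,\calO_Y)$ and $H^i_Z(Y,\calO_Y)$ forces the transition $H^j_Z(Y, W_{n+1}\calO_Y) \to H^j_Z(Y, W_n\calO_Y)$ to be an isomorphism (indeed both sides are $0$) for $j \in \{i-1, i\}$. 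Hence $H^{i-1}_Z(Y, W_n\calO_Y) = H^i_Z(Y, W_n\calO_Y) = 0$ for every $n \geq 1$.

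To conclude, I would combine this with the identification $\R\Gamma_Z(Y, W\calO_Y) \simeq \R\lim_n \R\Gamma_Z(Y, W_n\calO_Y)$ recalled in the Notation section. The resulting Milnor exact sequence
\[
0 \to \R^1\lim_n H^{i-1}_Z(Y, W_n\calO_Y) \to H^i_Z(Y, W\calO_Y) \to \lim_n H^i_Z(Y, W_n\calO_Y) \to 0
\]
has vanishing outer terms by the previous step, so $H^i_Z(Y, W\calO_Y) = 0$, and a fortiori $H^i_Z(Y, W\calO_Y)_\Q = 0$. There is no serious obstacle in this argument; it is purely formal once the SES above is in hand. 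The only point to be careful about is that vanishing in two consecutive degrees is genuinely needed to propagate through the induction, since the boundary maps in the long exact sequence couple $H^{i-1}_Z$ with $H^i_Z$, and dropping either hypothesis would stop the induction at $n = 2$.
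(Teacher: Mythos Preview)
Your proof is correct and follows essentially the same approach as the paper's: both arguments rest on the Milnor sequence for $\R\lim_n \R\Gamma_Z(Y,W_n\calO_Y)$ together with the dévissage of $W_n\calO_Y$ as an iterated extension of copies of $\calO_Y$. The only cosmetic difference is that you run the contrapositive by induction on $n$, whereas the paper argues directly that nonvanishing of $H^i_Z(Y,W\calO_Y)_\Q$ forces $H^j_Z(Y,W_n\calO_Y)\neq 0$ for some $n$ and $j\in\{i-1,i\}$, then unwinds the extension.
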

\begin{proof}
The assumption on $H^i_Z(Y,W\calO_Y)_{\Q}$ and the formula
	\[ \R\Gamma_Z(Y,W\calO_Y)_{\Q} \simeq \big(\R\lim_n \R\Gamma_Z(Y,W_n\calO_Y)\big) \otimes_{\Z} \Q\]
	show that $H^j_Z(Y,W_n\calO_Y) \neq 0$ for some $n > 0$ and some $j \in \{i-1,i\}$ (as $\R^i \lim = 0$ for $i > 1$). The rest follows by standard exact sequences expressing $W_n\calO_Y$ as an iterated extension of copies of $\calO_Y$.
\end{proof}

We need trace maps in Witt vector cohomology, so we recall a direct construction (essentially due to \cite{SuslinVoevodskySingHom}).

\begin{lemma}
\label{lem:trace}
Let $f:Y \to X$ be a finite surjective morphism of noetherian normal schemes. For any abelian sheaf $A$ on $X_\et$ that is representable by an algebraic space, there is a functorial trace map $\Tr:f_*f^*A \to A$ such that the composite $A \stackrel{f^*}{\to} f_* f^* A \stackrel{\Tr}{\to} A$ is multiplication by the generic degree of $f$.
\end{lemma}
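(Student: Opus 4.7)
The plan is to follow Suslin--Voevodsky's construction of transfers along finite surjective morphisms over normal bases. Write $d$ for the generic degree of $f$. The first step is to produce a canonical $X$-morphism $\sigma_f : X \to \Sym^d_X(Y)$. Over each generic point $\eta$ of $X$, the fiber $Y_\eta$ is a finite $k(\eta)$-scheme of total degree $d$ and so determines a $k(\eta)$-point of $\Sym^d_X(Y)$; the content of \cite[Theorem 6.8]{SuslinVoevodskySingHom} is that, when $X$ is normal noetherian and $f$ is finite surjective, this generic-point datum spreads uniquely to a morphism $\sigma_f$ over all of $X$ (equivalently: $Y$ defines a relative effective $0$-cycle of degree $d$ on $Y/X$). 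Normality of $X$ is essential, and the construction is compatible with \'etale base change since all ingredients are preserved.

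The second step is to define the trace using $\sigma_f$. For an \'etale $U \to X$, a section $t \in (f_*f^*A)(U) = A(U \times_X Y)$ is a morphism of $U$-algebraic spaces $t : U \times_X Y \to A_U$. Composing the $d$-fold relative product $t^{\times d} : (U \times_X Y)^d_U \to A_U^d$ with the $d$-fold addition $A_U^d \to A_U$ gives an $S_d$-invariant map $(U \times_X Y)^d_U \to A_U$, which descends (by the universal property of the symmetric product) to a canonical map $\phi_t : \Sym^d_U(U \times_X Y) \to A_U$. Define $\Tr(t) := \phi_t \circ \sigma_{f,U} \in A(U)$; these assignments assemble into a map of \'etale sheaves $\Tr : f_*f^*A \to A$, functorial in $A$ and compatible with \'etale localization on $X$.

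The final step is to compute $\Tr \circ f^*$. For $s \in A(U)$, the section $f^*s \in A(U \times_X Y)$ is $s \circ \pi$ where $\pi : U \times_X Y \to U$, so it is constant along fibers. Thus $\phi_{f^*s}$ factors as $\Sym^d_U(U \times_X Y) \to U \xrightarrow{d \cdot s} A_U$ (the structure map followed by $d$-fold addition of the constant value $s$), and composing with $\sigma_{f,U}$ (which is a section of $\Sym^d_U(U \times_X Y) \to U$) gives $d \cdot s$. Hence $\Tr \circ f^* = d \cdot \id$. The main obstacle is Step~1: the existence of $\sigma_f$ is a genuine consequence of the normality of $X$ and is the geometric content of \cite[\S6]{SuslinVoevodskySingHom}; the algebraic-space hypothesis on $A$ enters only in Step~2, ensuring that the $d$-fold product $t^{\times d}$ and the addition map on $A$ exist in the sheaf-theoretic setting.
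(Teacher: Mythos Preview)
Your proposal is correct and is precisely the Suslin--Voevodsky construction that the paper invokes: the paper's own proof is simply a pointer to \cite[Proposition~6.2]{Bhattanngrpsch}, whose argument is exactly the symmetric-product transfer you have spelled out. The only point worth flagging is that the descent in Step~2 uses that $(U\times_X Y)^d_U \to \Sym^d_U(U\times_X Y)$ is a universal categorical quotient (a finite group acting on an affine $U$-scheme), so $S_d$-invariant maps to the algebraic space $A_U$ do indeed factor; you rely on this implicitly, and it is the reason the representability hypothesis on $A$ is needed.
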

\begin{proof}
	We refer the reader to \cite[Proposition 6.2]{Bhattanngrpsch} for a proof.
\end{proof}

\begin{remark}
The trace constructed in Lemma \ref{lem:trace} is non-standard and slightly ad hoc. For example, if $f:X \to X$ is the Frobenius map, then $f^*:\Shv(X_\et) \to \Shv(X_\et)$ is an equivalence, so the ``correct'' trace map should be an equivalence, while the one from Lemma \ref{lem:trace} is multiplication by $\deg(f)$ (composed with the inverse of $\id \stackrel{\simeq}{\to} f_* f^*$). In particular, the trace map constructed in Lemma \ref{lem:trace} does {\em not} furnish a right adjoint to $f_! \simeq f_*$.
\end{remark}

Using trace maps, we show that (rational) Witt vector cohomology cannot be killed by finite covers.

\begin{corollary}
	\label{cor:rigsummand}
	Let $f:Y \to X$ be a finite surjective morphism of noetherian normal $k$-schemes, and let $Z \subset X$ be a closed subset. Then $H^i_Z(X,W\calO_X)_{\Q} \to H^i_{f^{-1} Z}(Y,W\calO_Y)_{\Q}$ is a direct summand.
\end{corollary}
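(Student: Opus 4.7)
The plan is to build a trace map in Witt-vector cohomology whose composition with the pullback is multiplication by the generic degree $d$ of $f$, and then invoke that $d$ is invertible in $\Q$.

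First, for each $n$ the sheaf $W_n\calO_X$ on $X_\et$ is representable by a scheme (namely $\A^n_X$ via Witt coordinates), and étale pullback sends it to $W_n\calO_Y$ (since $W_n$ commutes with arbitrary base change of the underlying scheme). Lemma \ref{lem:trace} therefore yields a trace map $\Tr_n : f_* W_n\calO_Y \to W_n\calO_X$ such that $W_n\calO_X \to f_* W_n\calO_Y \xrightarrow{\Tr_n} W_n\calO_X$ is multiplication by $d$. By the functoriality of Lemma \ref{lem:trace} applied to the restriction maps $W_{n+1} \to W_n$, these traces are compatible in $n$, so they assemble (under $\R\lim$) into a single trace $\Tr : f_* W\calO_Y \to W\calO_X$ with the same property.

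Next I would apply $\R\Gamma_Z(X,-)$. Because $f$ is finite (hence affine) and $W_n\calO_Y$ has no higher cohomology on affines, $R f_* W\calO_Y \simeq f_* W\calO_Y$, and therefore $\R\Gamma_Z(X, f_* W\calO_Y) \simeq \R\Gamma_{f^{-1} Z}(Y, W\calO_Y)$. Taking $H^i$ and tensoring with $\Q$, one obtains a composition
\[
H^i_Z(X, W\calO_X)_\Q \longrightarrow H^i_{f^{-1} Z}(Y, W\calO_Y)_\Q \xrightarrow{\Tr} H^i_Z(X, W\calO_X)_\Q
\]
equal to multiplication by $d \in \Z_{>0}$, which is a unit in $\Q$. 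This exhibits the first map as split injective, proving the corollary.

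The main technical obstacle is verifying the compatibility of the $\Tr_n$ as $n$ varies and the identification $\R\Gamma_Z(X, f_* W\calO_Y) \simeq \R\Gamma_{f^{-1} Z}(Y, W\calO_Y)$. The former follows formally from the naturality built into Lemma \ref{lem:trace} (applied to the restriction maps, which are natural transformations of the representable functors $W_{n+1}$ and $W_n$), while the latter is a standard consequence of $f$ being affine. No further input beyond what the paper has already set up should be required.
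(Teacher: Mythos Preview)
Your proposal is correct and matches the paper's own proof essentially step for step: apply Lemma~\ref{lem:trace} to the representable sheaves $W_n\calO$, use naturality in $n$ to pass to the limit, and then apply $H^i_Z(X,-)_\Q$ using that $d$ becomes a unit. Your only addition is making explicit the identification $\R\Gamma_Z(X, f_* W\calO_Y)\simeq \R\Gamma_{f^{-1}Z}(Y, W\calO_Y)$ via affineness of $f$, which the paper leaves implicit.
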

\begin{proof}
	Let $d$ be the generic degree of $f$. As $W_n(-)$ is representable for each $n > 0$, Lemma \ref{lem:trace} gives maps $f_* W_n \calO_Y \to W_n \calO_X$ whose composition with the pullbacks $W_n \calO_X \to f_* W_n \calO_Y$ is multiplication by $d$. These maps are compatible as $n$ varies, so taking limits (and commuting them with $f_*$) gives a map $f_* W\calO_Y \to W\calO_X$ whose composition with $W\calO_X \to f_* W\calO_Y$ is multiplication by $d$. The claim follows by applying $H^i_Z(X,- \otimes_{\Z} \Q)$.
\end{proof}

\subsection{The main theorem}

To prove Theorem \ref{thm:mainthm}, we first establish some notation.

\begin{notation} Fix a polarised projective variety $(A,L)$ of a perfect characteristic $p$ field $k$; set $R = \oplus_{n \geq 0} H^0(A,L^n)$ to be the section ring with $\fram \subset R$ the homogeneous maximal ideal. Set $X = \Spec(R)$, $Z = \{\fram\} \subset X$ and $U = X - Z$ with $\pi:U \to A$ realising $U$ as the total space of $L^{-1}$ over $A$. Set $\widehat{X} = \Spec(\widehat{R})$, $\widehat{U} = \widehat{X} \times_X U$, and abusively let $Z \subset \widehat{X}$ denote the closed point.
\end{notation}

Next, we record the expected relation between the cohomology of $A$ and $U$:

\begin{lemma}
	\label{lem:cones}
The natural map $W_n\calO_A \to \pi_* W_n\calO_U$ is a direct summand for all $n$. In particular, $H^i(A,W_n \calO_A) \to H^i(U,W_n\calO_U)$ is a direct summand for all $i$ and $n$.
\end{lemma}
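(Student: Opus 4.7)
The plan is to use the natural $\G_m$-action on $U$ coming from the grading of $R$ to split off $W_n\calO_A$ as the weight-$0$ subsheaf. Recall that $\pi : U \to A$ is a $\G_m$-torsor with $\pi_*\calO_U \cong \bigoplus_{d \in \Z} L^d$; for $n = 1$, projecting to the weight-$0$ summand $\calO_A = L^0$ already exhibits the claim (as $\calO_A$-modules).

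For general $n$, the same $\G_m$-action combined with the Teichm\"uller group-scheme embedding $\G_m \hookrightarrow W_n\G_m$ and the resulting $W_n\G_m$-action on $W_n U$ induces an algebraic $\G_m$-action on $\pi_*W_n\calO_U$. Over a trivializing affine open $V \subseteq A$ with torsor-coordinate $t$, so that $\pi_*\calO_U|_V = \calO_V[t,t^{-1}]$, this action sends a Witt vector $(f_0(t), \ldots, f_{n-1}(t)) \in W_n(\calO_V[t,t^{-1}])$ to $(f_0(\lambda t), \ldots, f_{n-1}(\lambda t))$, and the $\G_m$-invariants are visibly $W_n\calO_V$. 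I would then extract $W_n\calO_A$ as the invariant (weight-$0$) direct summand in the associated weight decomposition of $\pi_*W_n\calO_U$.

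The main obstacle is verifying that $\pi_*W_n\calO_U$ genuinely decomposes as a direct sum of $\G_m$-weight eigensheaves, rather than merely admitting a weight-$0$ sub-sheaf. The naive ``Teichm\"uller-homogeneous'' Witt vectors of weight $w$ --- tuples whose $i$-th component lies in $L^{w p^i}$ --- form a subsheaf closed under Witt addition, but they do not span $\pi_*W_n\calO_U$ as an abelian sheaf; so one needs the richer $W_n\G_m$-coaction and its eigendecomposition on the $W_n\calO_A$-module structure of $\pi_*W_n\calO_U$. As a fallback I would proceed by induction on $n$, applying the short exact sequence $0 \to \calO_Y \xrightarrow{V^{n-1}} W_n\calO_Y \to W_{n-1}\calO_Y \to 0$ in parallel for $Y \in \{A, U\}$ and attempting to lift the splittings at the two extremes to a splitting in the middle; this requires a careful construction, since the locally available ring splittings $W_n(\calO_V[t,t^{-1}]) \to W_n\calO_V$ (evaluation at a section $t = 1$) do not glue, differing on overlaps by the torsor's transition data.

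The second assertion is automatic once the sheaf-level splitting is in hand: since $\pi$ is affine and $W_n\calO_U$ equals $\calO_U^{\oplus n}$ as a sheaf of sets, we have $R^j\pi_*W_n\calO_U = 0$ for $j > 0$, hence $H^i(U, W_n\calO_U) \cong H^i(A, \pi_*W_n\calO_U)$, and a sheaf-level direct summand descends to a direct summand on cohomology.
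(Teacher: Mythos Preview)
Your approach is the same as the paper's, and you have in fact gone further than the paper in identifying the delicate point. The paper's entire proof is two sentences: the $n=1$ case is the weight-$0$ eigenspace of the $\G_m$-action on $\pi_*\calO_U$, and then ``the rest follows by taking products and observing that $W_n(R) \simeq R^n$ as sets functorially in $R$.'' Unwound, this says exactly what you wrote: the $\G_m$-action on $\pi_*W_n\calO_U = W_n(\pi_*\calO_U)$ is, on underlying sets, the diagonal action on $(\pi_*\calO_U)^n$, so its fixed-point set is $(\calO_A)^n = W_n\calO_A$. The paper does not address why this fixed subsheaf is an \emph{abelian-group} direct summand, which is precisely your ``main obstacle.''

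Your worry is not idle. The naive candidate retraction --- apply the weight-$0$ projection $\pi_*\calO_U \to \calO_A$ in each Witt component --- is \emph{not} a group homomorphism: already for $n=2$ one must check $r_0(C(f_0,g_0)) = C(r_0 f_0, r_0 g_0)$ for the Witt carry $C$, and this fails (take $f_0 = t$, $g_0 = t^{-1}$). So the set-theoretic splitting furnished by ``$W_n(R) \simeq R^n$'' does not close the argument on its own, and the paper is no more explicit here than you are. Your proposed fixes --- either pass to a genuine comodule structure over $W_n\G_m$ (so that reductivity of the torus gives the Reynolds-type splitting), or run the $V$-filtration induction --- are the right directions; one can also write down an explicit additive correction term to the naive retraction in low $n$ and see that it is invariant under change of trivialization, so it glues. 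Your deduction of the cohomological consequence from the sheaf-level splitting (via affineness of $\pi$ and vanishing of $R^j\pi_*W_n\calO_U$) is correct and is the only part of the ``In particular'' that needs saying.
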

\begin{proof}
As $\pi$ is a $\G_m$-torsor, the natural map $\calO_A \to \pi_* \calO_U$ realises the source as the weight $0$ eigenspace of the target.  The rest follows by taking products and observing that $W_n(R) \simeq R^n$ as sets functorially in $R$.
\end{proof}

We can now prove the main theorem:

\begin{proof}[Proof of Theorem \ref{thm:mainthm}]
	As $W_n \calO_X$ is an extension of $\calO_X$ by $W_{n-1} \calO_X$, induction and the affineness of $X$ show $H^i(X,W_n \calO_X) = 0$ for $i > 0$. Standard sequences then identify $H^i(U,W_n\calO_U) \simeq H^{i+1}_Z(X,W_n\calO_X)$ for all $i,n > 0$. Since $H^0(U,W_n\calO_U) = W_n(H^0(U,\calO_U))$,  the system $\{H^0(U,W_n\calO_U)\}$ has no $\lim^1$ (as $W_n(R) \to W_{n-1}(R)$ is surjective for any ring $R$), so 
	\[ H^i(U,W\calO_U)_{\Q} \simeq H^{i+1}_Z(X,W\calO_X)_{\Q} \simeq H^{i+1}_Z(\widehat{X},W\calO_{\widehat{X}})_{\Q}\] 
	for $i > 0$, where the last isomorphism comes from the excision identification $\{R\Gamma_Z(X,W_n\calO_X)\} \simeq \{\R\Gamma_Z(\widehat{X},W_n\calO_{\widehat{X}})\}$ of projective systems. Theorem \ref{thm:bbe} and Lemma \ref{lem:cones} then show $H^i_\rig(A)_{< 1} \simeq H^i(A,W\calO_A)_{\Q}$ is a direct summand of $H^{i+1}_{\widehat{Z}}(\widehat{X},W\calO_{\widehat{X}})_{\Q}$ for $i > 0$. Choose $0 < i < \dim(A)$ such that $H^i_{\rig}(A)_{< 1} \neq 0$. Let $f:\widehat{Y} \to \widehat{X}$ be a finite surjective morphism of noetherian normal schemes; we will show that $\widehat{Y}$ is not CM along $f^{-1}Z$. Corollary \ref{cor:rigsummand} shows $H^{i+1}_{f^{-1} Z}(\widehat{Y},W\calO_{\widehat{Y}})_{\Q} \neq 0$. Lemma \ref{lem:devissagewitt} then gives $H^j_{f^{-1}Z}(\widehat{Y},\calO_{\widehat{Y}}) \neq 0$ for some $j \in \{i,i+1\}$, which proves the claim as $i+1 < \dim(A) + 1 = \dim(\widehat{Y})$.
\end{proof}

\begin{remark}
	As rigid cohomology is a Weil cohomology theory, the hypothesis on $A$ in Theorem \ref{thm:mainthm} may be reformulated topologically, at least when $A$ is smooth and $k$ is finite,  to say: for some $0 < i < \dim(A)$, there is at least one Frobenius eigenvalue on $H^i(A_{\overline{k}},\Q_\ell)$ which is not divisible by $p$ (for some auxilliary prime $\ell$ invertible on $k$). Indeed, the eigenvalues occurring in $H^i_\rig(A)$ coincide with those on $H^i(A_{\overline{k}},\Q_\ell)$ and are algebraic integers.
\end{remark}

\begin{example}
Some elementary examples of projective varieties $A$ to which Theorem \ref{thm:mainthm} applies include: any projective variety of dimension $\geq 2$ dominating a positive dimensional abelian variety, any projective variety of dimension $\geq 3$ dominating a $K3$ surface of finite height, etc.
\end{example}

We give an example showing that the presence of non-trivial middle cohomology of the structure sheaf does {\em not} force the non-existence of a small CM algebra for the section ring, even for smooth projective varieties; this example also shows the necessity of making a {\em $p$-adic} (rather than modulo $p$)  assumption on $A$ in Theorem \ref{thm:mainthm}.

\begin{example}
\label{ex:illusieserre}
Assume $k$ is algebraically closed, and let $X$ be a smooth complete intersection in some $\P^n_k$ with $\dim(X) \geq 2$ with the property that a suitable subgroup $G \simeq \F_p \subset \GL_{n+1}(k)$ preserves $X$ and acts fixed point freely on $X$; such examples were constructed by Serre, see \cite[\S 6]{IllusieFormalGeom}. Let $Y = X/G$ denote the quotient. Then $X \to Y$ is a finite \'etale $G$-torsor, and $Y$ is a smooth projective $k$-variety; explicitly, the line bundle $\calO(1) \in \Pic(X)$ is equivariant for the $G$-action, and hence descends to an ample line bundle $L$ on $Y$. Let $R$ and $S$ denote the section rings of $(Y,L)$ and $(X,\calO(1))$ respectively. Then $R \to S$ is a finite extension. We will show that $S$ is CM (even lci), but $R$ is not. The former follows immediately as $(X,\calO(1))$ is a complete intersection. On the other hand, the Lefschetz formalism (see \cite[Corollary XII.3.5]{SGA2}) shows that $X$ is simply connected. The Leray spectral sequence for $X \to Y$ then yields $\F_p \simeq G \simeq \pi^\et_1(Y) \simeq H^1(Y_\et,\F_p)^\vee$,  and hence $H^1(Y,\calO_Y) \neq 0$ by the Artin-Schreier sequence. The latter group is a direct summand of $H^2_\fram(R)$, so $R$ is not CM (as $\dim(R) = \dim(X) + 1 \geq 3$).
\end{example}

\begin{remark}
	\label{rmk:moregeneralstatement}
Let $(R,\fram)$ be a noetherian local $k$-algebra. The proof of Theorem \ref{thm:mainthm} shows that being ``Witt Cohen-Macaulay'' (i.e., having $H^i_\fram(\Spec(R),W\calO_{R,\Q}) = 0$ for $i < \dim(R)$) is necessary for the existence of small CM algebras. We do not know if it is a sufficient condition. A (weakened) graded analogue asks: any projective variety $X$ with $H^i(X,W\calO_{X,\Q}) = 0$ for $0 < i < \dim(X)$ admits an alteration $\pi:Y \to X$ with $H^i(Y,\calO_Y) = 0$ for $0 < i < \dim(Y)$. The simplest non-trivial instance of this question is when $X = S \times \P^1$ with $S$ a supersingular $K3$ surface, where a  positive answer  is implied by Artin's conjecture on unirationality of supersingular $K3$ surfaces. 
\end{remark}


\bibliography{smallcmmod}

\end{document}